%% file: main.tex
\documentclass[reqno]{amsart}
\usepackage[foot]{amsaddr}

\usepackage[hidelinks]{hyperref}

\usepackage{url}
\usepackage[nameinlink,capitalize]{cleveref}

\usepackage{amssymb}
\usepackage{amscd}
\usepackage{amsthm}
\usepackage{setspace}
\usepackage{enumerate}
\usepackage{graphicx}
\usepackage{mathtools}
\usepackage{tcolorbox} 
\usepackage{subcaption} 
\usepackage{siunitx}
\usepackage{tikz-cd}
\usepackage{bm,blkarray}
\numberwithin{equation}{section}
\usepackage{algorithm}
\usepackage{algpseudocode}
\usepackage{microtype}

\usepackage{mathtools}
\usepackage[tableposition=top]{caption}
\usepackage{booktabs,dcolumn}

\newcommand\R{\mathbb{R}}

\newcommand\eps{\varepsilon}

\usepackage{xcolor}      
\usepackage{listings}    

\definecolor{codegreen}{rgb}{0,0.6,0}
\definecolor{codegray}{rgb}{0.5,0.5,0.5}
\definecolor{codepurple}{rgb}{0.58,0,0.82}
\definecolor{white}{rgb}{1,1,1}

\lstdefinestyle{mystyle}{
    backgroundcolor=\color{white}, 
    commentstyle=\color{codegreen},
    keywordstyle=\color{magenta}, numberstyle=\tiny\color{codegray}, stringstyle=\color{codepurple}, basicstyle=\ttfamily\footnotesize, breakatwhitespace=false,  breaklines=true,                 
    captionpos=b,                    
    keepspaces=true,                 
    numbers=left,                    
    numbersep=5pt,                  
    showspaces=false,                
    showstringspaces=false,
    showtabs=false,                  
    tabsize=2,
    frame=single,
    rulecolor=\color{black},
    title=\lstname
}

\title[What is Jackson's constant?]{What is Jackson's constant?}
\author{Rikhav Shah}
\address{\scriptsize Department of Mathematics, Massachusetts Institute of Technology, Cambridge, MA, 02139 USA.}
\email{rdshah@mit.edu}
\author{John Urschel}
\email{urschel@mit.edu}
\author{Nicholas West}
\email{npwest00@mit.edu}
\subjclass[]{42A10, 41A17}
\keywords{Jackson's theorem, trigonometric approximation}

\newtheorem{theorem}{Theorem}[section]

\newtheorem{lemma}[theorem]{Lemma}

\input{preamble}

\renewcommand{\epsilon}{\eps}

\renewcommand{\paragraph}[1]{
\vspace{2.5mm}\noindent\textbf{#1. }
}

\newcommand{\jac}{J}
\newcommand{\riv}{R}

\begin{document}

\begin{abstract}
We prove a refinement of Jackson's theorem on the approximation of Lipschitz functions by trigonometric polynomials. Our result precisely characterizes the leading error term associated with Jackson's construction. We do the same for a related construction commonly used in the kernel polynomial method for spectral density estimation, which is slightly better than Jackson's construction in this respect.
\end{abstract}

\maketitle

\section{Overview and main results}

Theorems established by Dunham Jackson in the early 1900s regarding the approximation of Lipschitz functions by trigonometric and algebraic polynomials have undergone a resurgence in recent decades. This renewed interest is due in large part to their utility in large-scale eigenvalue computations for Hermitian matrices, most notably the kernel polynomial method for spectral density estimation and polynomial filtering in the Eigenvalues Slicing Library (EVSL) \cite{weisse2006kernel, lin2016approximating, li2019eigenvalues}. Indeed, Jackson's theorems and variants of his construction have been used repeatedly in recent research developing and analyzing algorithms for spectral density estimation \cite{braverman2022sublinear, musco2024sharper, chen2025randomized, musco2026spectraldensityestimationnormal}. In this note, we provide sharp error estimates for the constants associated with Jackson's original construction \cite{jackson1912approximation} and the more recent variant described by Rivlin \cite{rivlin1981introduction} and Wei{\ss}e et al.\ \cite{weisse2006kernel}. We find that the constant in Jackson's original construction is about \(9 \%\) larger than that of the more recent variants, suggesting that the latter possesses a slight edge for practical applications.

Throughout this note, we assume the given function \(f:\R\to\R\) is \(2\pi\)-periodic and \(L\)-Lipschitz continuous, i.e. there exists a constant \(L > 0\) such that
\[
    |f(x)-f(y)| \leq L|x - y|
    \quad \text{for all } x,y \in \mathbb{R}.
\]
A function is a \textit{trigonometric sum of degree} \(n\) if it is in the linear span of the functions
\[
\{1,\cos(x),\sin(x),\cdots,\cos(nx),\sin(nx)\}.
\]
Jackson's main theorems from \cite{jackson1912approximation} state the following.\footnote{Jackson also includes results analogous to \Cref{thm:jackson} on functions whose \((k-1)\)th derivative is \(L\)-Lipschitz, showing that the order of convergence is in general \(O(n^{-k})\), but this is beyond the scope of this note.}

\begin{theorem}[Jackson]\label{thm:jackson}
For each \(n\ge1\), there exists a trigonometric sum \(p\) of degree at most \(n\) satisfying
\begin{equation}\label{eq:jacksonorigbound}
    \max_{x \in [-\pi, \pi]} |p(x)-f(x)| \leq \dfrac{2.9L}{n}.
\end{equation}
\end{theorem}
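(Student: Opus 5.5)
We prove the bound by convolving \(f\) with Jackson's kernel, which is a nonnegative trigonometric sum of degree at most \(n\), and then estimating the error using only the Lipschitz property and a first-moment bound on the kernel.

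\textbf{Construction.} Let \(m=\lfloor n/2\rfloor+1\) and define
\[
K_n(t)=c_n\left(\frac{\sin(mt/2)}{\sin(t/2)}\right)^4 .
\]
Here \(c_n>0\) is chosen so that \(\frac{1}{2\pi}\int_{-\pi}^{\pi}K_n(t)\,dt=1\). The function \(\bigl(\sin(mt/2)/\sin(t/2)\bigr)^2\) is the Fejér kernel up to a factor, and is a cosine sum of degree \(m-1\). Its square is therefore a cosine sum of degree \(2m-2\le n\), and it is nonnegative. Set
\[
p(x)=\frac{1}{2\pi}\int_{-\pi}^{\pi}f(x-t)K_n(t)\,dt .
\]
Each \(\cos(k(x-t))\) expands into terms in \(\cos(kx)\) and \(\sin(kx)\), so \(p\) is a trigonometric sum of degree at most \(n\).

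\textbf{Reduction to a moment bound.} Since \(K_n\ge0\) has unit mean,
\[
|p(x)-f(x)|\le\frac{1}{2\pi}\int_{-\pi}^{\pi}|f(x-t)-f(x)|K_n(t)\,dt\le \frac{L}{2\pi}\int_{-\pi}^{\pi}|t|K_n(t)\,dt .
\]
It therefore suffices to show that the first absolute moment \(\mu_n=\frac{1}{\pi}\int_0^\pi tK_n(t)\,dt\) satisfies \(\mu_n\le 2.9/n\).

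\textbf{Bounding the moment.} Write \(\mu_n=N/D\), where
\[
N=\int_0^\pi t\,\frac{\sin^4(mt/2)}{\sin^4(t/2)}\,dt,\qquad D=\int_0^\pi \frac{\sin^4(mt/2)}{\sin^4(t/2)}\,dt .
\]
Use \(\frac{t}{\pi}\le\sin(t/2)\le \frac t2\) on \([0,\pi]\) and substitute \(u=mt/2\). For the numerator,
\[
N\le \pi^4\int_0^\pi \frac{\sin^4(mt/2)}{t^3}\,dt=\frac{\pi^4m^2}{4}\int_0^{m\pi/2}\frac{\sin^4u}{u^3}\,du\le \frac{\pi^4m^2}{4}\int_0^\infty\frac{\sin^4u}{u^3}\,du .
\]
For the denominator,
\[
D\ge 16\int_0^\pi\frac{\sin^4(mt/2)}{t^4}\,dt=2m^3\int_0^{m\pi/2}\frac{\sin^4u}{u^4}\,du .
\]
The constants are \(\int_0^\infty \sin^4u/u^3\,du=\ln 2\) and \(\int_0^\infty\sin^4u/u^4\,du=\pi/3\). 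In \(D\) the integral is truncated at \(m\pi/2\), and the tail is at most \(\int_{m\pi/2}^\infty u^{-4}\,du\), which is small. Combining,
\[
\mu_n\lesssim \frac{\pi^4\ln 2/4}{\pi\cdot 2\pi m/3\cdot(1-\text{tail})}\cdot\frac1\pi .
\]
Track these constants, and use \(m\ge n/2\), to obtain an explicit \(C/n\).

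\textbf{Main obstacle.} The difficulty is getting the constant down to \(2.9\) rather than some larger number. The crude bounds \(\sin(t/2)\ge t/\pi\) in the numerator waste a factor of \((\pi/2)^4\), which is too lossy. I would refine the numerator bound as follows. Split at \(t=\pi/m\)-scale and use \(\sin(t/2)\ge (t/2)(1-t^2/24)\) on the main region. On the remaining region, where the kernel is \(O(m^{-3})\) small anyway, use the crude bound. The true asymptotic value is \(\mu_n\sim \frac{6\ln2}{\pi m}\approx\frac{2.65}{n}\) for even \(n\) (\(m\approx n/2\)). So a careful but elementary bookkeeping of these error terms should give \(\le 2.9/n\) for all \(n\) beyond a small threshold. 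Small \(n\) would be checked directly, for example using the trivial approximant \(p\equiv f(0)\), whose error is at most \(L\pi\le 2.9L/n\) once \(n\ge1\)… which fails only at \(n=1\). For \(n=1,2\), compute \(\mu_n\) exactly from the explicit Fourier coefficients of \(K_n\).
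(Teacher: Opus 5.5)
Your route (Jackson's kernel with $m=\lfloor n/2\rfloor+1$, the Lipschitz bound $|p(x)-f(x)|\le L\mu_n$, and $\mu_n=N/D$) is the one behind the paper's Lemmas~\ref{lem:momentbound} and~\ref{lem:jacksonbound}. There $D=\frac{\pi}{3}(2m^3+m)$ is computed exactly and $N=4m^2\log 2+O(\log m)$. The paper does not prove this theorem, though: it is quoted from Jackson, and the paper's own result for $J_m$ is asymptotic with an unquantified remainder. So it cannot certify $2.9$ for every $n\ge 1$ either. That certification is the whole content of the statement, and since $12\log 2/\pi\approx 2.648$ the margin is under $10\%$. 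Your proposal does not carry it out; it stops at ``careful but elementary bookkeeping \ldots should give''. The one explicit estimate you do compute is also miscomputed. Your bounds on $N$ and $D$ give $\mu_n\le\frac{3\pi^3\log 2}{8m}(1+o(1))\approx 8.06/m$, about $16/n$. Your displayed expression has spurious factors of $\pi$ and comes out below the true asymptotic value $\frac{6\log 2}{\pi m}$, which no upper bound can do.

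The refinement you sketch fails as described, because the split scale is wrong. At $t\sim\pi/m$ the normalized kernel has size $\asymp m$, not $O(m^{-3})$; that smallness holds only for $t$ bounded below by a constant. Since $K_n(t)\approx 24\,m^{-3}t^{-4}\sin^4(mt/2)$ for small $t$, the range $t\ge c/m$ carries a share of the first moment that does not go to zero as $m\to\infty$. Concretely, about $15\%$ of $\int_0^\infty\sin^4u\,u^{-3}\,du=\log 2$ lies beyond $u=\pi/2$ (that is, $t=\pi/m$), and inflating that part by $(\pi/2)^4$ gives roughly $4.7/n$.

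Instead, use a sharp comparison on all of $[0,\pi]$. For example, $(x/\sin x)^4\le 1+cx^2$ holds on $[0,\pi/2]$ with $c=\pi^2/4-4/\pi^2\approx 2.06$, because $(x/\sin x)^4$ has a power series with nonnegative coefficients. This gives $N\le 4m^2\log 2+4c\int_0^{m\pi/2}\sin^4u\,u^{-1}\,du$. Then compare the explicit bound on $N/D$ with $2.9/(2m-1)$; odd $n=2m-1$ is the binding case, since $n=2m-2$ and $n=2m-1$ use the same kernel. The threshold this produces depends heavily on the constants:
\begin{itemize}
\item with $c=6$ and $\int_0^{m\pi/2}\sin^4u\,u^{-1}\,du\le\pi^4/4+\log(m/2)$, as in the paper's lemma, it covers all $n$ only from $m=48$ on;
\item with $c\approx 2.06$ and $\int_0^X\sin^4u\,u^{-1}\,du\le\frac14+\log X$, it covers $m\ge 6$.
\end{itemize}
Every smaller $m$ must then be checked exactly, for instance from the Fourier coefficients of $K_n$ using $\int_0^\pi t\cos(jt)\,dt=((-1)^j-1)/j^2$. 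For $m=1,2,3,4$ one finds $\mu=\pi/2$, $(3\pi^2-16)/(6\pi)\approx 0.722$, $\approx 0.469$ and $\approx 0.344$, all below $2.9/(2m-1)$.

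Your small-$n$ remark does not do this job. The constant $p\equiv f(0)$ has error up to $L\pi$, which exceeds $2.9L/n$ for every $n\ge 1$, not only $n=1$. Nothing in your argument shows that $n=1,2$ are the only cases left to check.
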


Jackson's proof has several attractive properties for applications: it is \textit{constructive}, i.e. he shows explicitly how to convert a given function \(f\) into a trigonometric sum \(p\); it is \textit{positivity-preserving}, so that if \(f(x)\ge0\) everywhere then \(p(x)\ge0\) everywhere; and it is \textit{mass-preserving} in the sense that \(\int f(x)\wrt x = \int p(x) \wrt x\).
A related but distinct construction of Rivlin \cite{rivlin1981introduction}, derived independently by Wei{\ss}e et al.\ \cite{weisse2006kernel}, is also positivity-preserving, mass-preserving, and satisfies a guarantee similar to \cref{eq:jacksonorigbound} for some other constant in place of 2.9.\footnote{We note that Wei{\ss}e et al. refer to the kernel derived in their work as ``Jackson's kernel'' although technically the kernel is not identical to that used by Jackson or as presented in \cite{devore1993constructive}. It is, however, identical to the kernel derived in \cite{rivlin1981introduction} in proving a variant of Jackson's theorem.} We emphasize these latter two properties of positivity and mass preservation since they undergird the aforementioned application to the kernel polynomial method for approximating probability distributions. Positivity-preservation and mass-preservation ensure in particular that if \(f\) defines a probability density function on \([-\pi,\pi]\), then \(p\) will as well.

Jackson also showed that the constant 2.9 appearing in \cref{eq:jacksonorigbound} could not be pushed lower than \(\pi/2\) for any construction, irrespective of the positivity and normalization requirements, though he did not show that \(\pi/2\) was achievable. This was shown later for a different construction by Favard and independently by Akhiezer and Krein. More specifically, one can construct an optimal approximation satisfying the improved error bound
\[
    \max_{x \in [-\pi, \pi]} |p(x)-f(x)| \leq \dfrac{\pi L}{2(n+1)},
\]
which precisely matches Jackson's lower bound.
An exposition of this result is given, for example, in \cite{devore1993constructive}. Although Favard's construction is optimal, it is not positivity-preserving, which is significant, for example, in the approximation of probability densities.

The purpose of this note is to identify the exact error constants involved in the positivity-preserving constructions of Jackson and Rivlin. Before stating our theorems, we first state what these constructions are.
Define the following kernel functions:
\begin{alignat*}{2}
    &\jac_m(t) = \pare{\frac{\sin(mt/2)}{m\sin(t/2)}}^4\cdot\mathbf1_{[-\pi,\pi]}(t), \hspace{2em}
    && \riv_n(t) = \pare{\frac{1}{2} + \sum_{j=1}^n \rho_{j,n}\cos(jt)}\cdot\mathbf1_{[-\pi,\pi]}(t), \\
    & &&
    \rho_{j,n} =
    \frac{(n+2-j)\cos\pare{\frac{j\pi}{n+2}}
    + \sin\pare{\frac{j\pi}{n+2}}\cot\pare{\frac{\pi}{n+2}}}{n+2},
    \\
    &\hat\jac_m(t)=\frac1{\int_\R\jac_m(x)\wrt x}\jac_m(t),
    &&
    \hat\riv_n(t)=\frac1{\int_\R\riv_n(x)\wrt x}\riv_n(t).
\end{alignat*}
The kernel on the left, \(\jac_m\), we call Jackson's kernel and the kernel on the right, \(\riv_n\), we call Rivlin's kernel. The derivation of the coefficients \(\rho_{j,n}\) is provided in \cite{weisse2006kernel}. The convolution of two functions is defined as
\[
    (f*g)(x)=\int_\R f(x-t)g(t)\wrt t.
\]
Jackson's construction sets \(p=f*\hat\jac_m\) for \(m=\floor{n/2}+1\) and Rivlin's sets \(p=f*\hat\riv_n\). When restricted to $[-\pi, \pi]$, \(\riv_n\) is an even trigonometric sum of order \(n\), and \(\jac_m\) can be written as the square of an even trigonometric sum of order \((m-1)\), see \Cref{eq:jkernelexpansion}, so that \(\jac_m\) is an even trigonometric sum of order \(2(m-1) \leq n\). Since \(p\) is formed by convolution, in either case it will be a trigonometric sum of order \(n\).

Our main results provide precise control on leading terms of Jackson and Rivlin's approximations.

\begin{theorem}[Control of Jackson's kernel]
Let \(p=f*\hat\jac_m\) for \(n\geq 2\) even and \(m=(n/2)+1\). Then
\[
    \sup_{x\in\R} |f(x)-p(x)|
    \leq
    \dfrac{12\log 2}{\pi} \cdot \dfrac{L}{n+2}
    + O\pare{\frac{\log n}{n^3}}.
\]
Note \[\frac{12\log(2)}{\pi} = 2.6476\cdots.\] Moreover, equality is obtained when \(f(x) = L|x|\) periodically extended from \([- \pi, \pi]\) to the real line.
\end{theorem}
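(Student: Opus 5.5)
Since \(\hat\jac_m\) is nonnegative, even, supported on \([-\pi,\pi]\) and has unit mass, I will write
\[
f(x)-p(x)=\int_{-\pi}^{\pi}\bigl(f(x)-f(x-t)\bigr)\hat\jac_m(t)\wrt t .
\]
The Lipschitz condition then gives \(|f(x)-p(x)|\le L\int_{-\pi}^{\pi}|t|\,\hat\jac_m(t)\wrt t\). For \(f(x)=L|x|\) at \(x=0\), the integrand is \(-L|t|\hat\jac_m(t)\), which has constant sign, so equality holds there. The whole problem therefore reduces to an asymptotic evaluation of the first absolute moment
\[
M_m=\frac{\int_0^\pi t\,\jac_m(t)\wrt t}{\int_0^\pi \jac_m(t)\wrt t},
\]
and I must show \(M_m=\frac{12\log 2}{\pi}\cdot\frac{1}{n+2}+O(\log n/n^3)\), where \(n+2=2m\).

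\textbf{Denominator.} I will expand \(\bigl(\frac{\sin(mt/2)}{\sin(t/2)}\bigr)^2\) as a Fejér-type sum \(\sum_{|k|<m}(m-|k|)e^{ikt}\). Squaring it (this is the expansion referred to as \Cref{eq:jkernelexpansion}) and integrating over \([-\pi,\pi]\) kills every non-constant mode, so
\[
\int_{-\pi}^{\pi}\Bigl(\frac{\sin(mt/2)}{\sin(t/2)}\Bigr)^4\wrt t=2\pi\sum_{|k|<m}(m-|k|)^2=\frac{2\pi(2m^3+m)}{3}.
\]
Hence \(\int_0^\pi\jac_m=\frac{\pi(2m^2+1)}{3m^3}\), and this is exact.

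\textbf{Numerator.} I will substitute \(u=mt/2\). The numerator becomes
\[
\frac{4}{m^2}\int_0^{m\pi/2}u\,\frac{\sin^4 u}{m^4\sin^4(u/m)}\wrt u .
\]
Next I use \(\frac{1}{\sin^4 s}=\frac1{s^4}+\frac{2}{3s^2}+O(1)\) on \([0,\pi/2]\), with \(s=u/m\). The main term is \(\frac{4}{m^2}\int_0^{m\pi/2}\frac{\sin^4u}{u^3}\wrt u\). The known integral \(\int_0^\infty \frac{\sin^4u}{u^3}\wrt u=\log 2\) gives \(\frac{4\log 2}{m^2}\), and the tail past \(m\pi/2\) contributes \(O(m^{-4})\). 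The correction \(\frac{2}{3m^2}\) term contributes \(\frac{4}{m^2}\cdot\frac{2}{3m^2}\int_0^{m\pi/2}\frac{\sin^4u}{u}\wrt u=O(\log m/m^4)\); this is the source of the logarithm in the error. The \(O(1)\) remainder term gives \(\frac{4}{m^2}\cdot O(m^{-4})\int_0^{m\pi/2} u\wrt u=O(m^{-4})\).

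\textbf{Ratio.} Dividing by \(\frac{2\pi}{3m}(1+O(m^{-2}))\) gives
\[
M_m=\frac{6\log 2}{\pi m}+O\Bigl(\frac{\log m}{m^3}\Bigr)=\frac{12\log2}{\pi(n+2)}+O\Bigl(\frac{\log n}{n^3}\Bigr).
\]

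The main obstacle is the numerator asymptotics. There are two delicate points. The first is verifying \(\int_0^\infty\sin^4u/u^3\wrt u=\log 2\); I would write \(\sin^4u=\frac{3-4\cos 2u+\cos 4u}{8}\) and integrate by parts twice, using the Frullani integral \(\int_0^\infty\frac{\cos au-\cos bu}{u}\wrt u=\log(b/a)\). The second is controlling the expansion of \(\sin^{-4}\) uniformly up to \(s=\pi/2\) so that the remainder is genuinely \(O(1)\). Because the bound and the equality case involve the same quantity \(M_m\), the sharpness claim for \(f(x)=L|x|\) follows immediately once \(M_m\) is computed.
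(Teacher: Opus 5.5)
Your proposal is correct and follows essentially the same route as the paper. Both arguments use the Lipschitz reduction to the first absolute moment (\Cref{lem:momentbound}), evaluate the denominator exactly via the Fejér/Parseval expansion, and obtain a main term from \(\int_0^\infty \sin^4 u/u^3\wrt u=\log 2\) with an \(O(\log m)\) correction from \(\int \sin^4 u/u\wrt u\). The only difference is cosmetic: the paper sandwiches the numerator using \(1\le (x/\sin x)^4\le 1+6x^2\) on \([0,\pi/2]\), where you use the two-term expansion of \(\sin^{-4}\) with bounded remainder, and this changes nothing in the result.
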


\begin{proof}
This is a direct consequence of \Cref{lem:momentbound} and \Cref{lem:jacksonbound}, proven in the second part of this note. Note that when the kernel \(\jac_m\) is used, it produces an approximation of degree \(n = 2(m-1)\). This causes the doubling from \(6 \log(2) / \pi\) in \Cref{lem:jacksonbound} to \(12 \log (2)/\pi\) in the present result, as well as the use of \(n+2\) in the denominator. If \(n\) is odd, the result remains true with the substitution \(n+2 \to n+1\) in the denominator and the choice \(m = \floor{n/2}+1\).
\end{proof}

\begin{theorem}[Control of Rivlin's kernel]
Let \(p=f*\hat\riv_n\). Then
\[
    \sup_{x\in\R} |f(x)-p(x)|
    \leq
    \left(2 \int_0^\pi\dfrac{\sin u}{u} \wrt u - \dfrac{4}{\pi}\right)\dfrac{L}{n}
    + O\pare{\dfrac{1}{n^2}}.
\]
Note \[2 \int_0^\pi \frac{\sin u}{u} \wrt u - \frac{4}{\pi} = 2.4306 \cdots.\]
Moreover, equality is obtained when \(f(x) = L|x|\) periodically extended from \([- \pi, \pi]\) to the real line.
\end{theorem}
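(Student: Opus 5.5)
The proof has two parts. First, a kernel-moment reduction valid for any even, nonnegative, normalized kernel. Then an asymptotic computation of the first absolute moment of \(\hat\riv_n\).

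\textbf{Reduction to the first absolute moment.} Since \(\hat\riv_n\) is even, nonnegative on \([-\pi,\pi]\) and integrates to \(1\), we have
\[
|f(x)-p(x)| = \Bigl|\int_{-\pi}^{\pi}\bigl(f(x)-f(x-t)\bigr)\hat\riv_n(t)\wrt t\Bigr| \le L\int_{-\pi}^{\pi}|t|\,\hat\riv_n(t)\wrt t .
\]
This is the analogue of \Cref{lem:momentbound}, and I would invoke that lemma directly. Nonnegativity of \(\riv_n\) is needed here. I would take it from the known product representation \(\riv_n(t)=c\,\bigl|\sum_{k=0}^{n}\sin\bigl(\tfrac{(k+1)\pi}{n+2}\bigr)e^{ikt}\bigr|^2\), which is how Wei{\ss}e et al.\ derive the \(\rho_{j,n}\).

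For \(f(x)=L|x|\) periodically extended, evaluate at \(x=0\). Since \(f(0)=0\) and \(f(-t)=L|t|\) on \([-\pi,\pi]\), the inequality is an equality there. So the bound is attained, and it remains to show
\[
\int_{-\pi}^{\pi}|t|\hat\riv_n(t)\wrt t=\Bigl(2\int_0^\pi\frac{\sin u}{u}\wrt u-\frac4\pi\Bigr)\frac1n+O(n^{-2}).
\]

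\textbf{Computing the moment.} The normalization is \(\int_{-\pi}^{\pi}\riv_n=\pi\), so \(\hat\riv_n=\riv_n/\pi\). I would use the Fourier expansion and \(\int_0^\pi t\cos(jt)\wrt t=((-1)^j-1)/j^2\), which gives
\[
\int_{-\pi}^{\pi}|t|\hat\riv_n(t)\wrt t=\frac{\pi}{2}-\frac{4}{\pi}\sum_{j\ \mathrm{odd}}\frac{\rho_{j,n}}{j^2}.
\]
Write \(N=n+2\) and \(\theta_j=j\pi/N\). Using \(\cot(\pi/N)=N/\pi-\pi/(3N)+O(N^{-3})\), we get
\[
\rho_{j,n}=\Bigl(1-\frac jN\Bigr)\cos\theta_j+\frac{\sin\theta_j}{\pi}+O(N^{-2}).
\]
Next use \(\sum_{j\ \mathrm{odd}}1/j^2=\pi^2/8\), which cancels the \(\pi/2\). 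The moment becomes
\[
\frac4\pi\sum_{j\ \mathrm{odd}\le n}\frac{1-\rho_{j,n}}{j^2}+\frac4\pi\sum_{j\ \mathrm{odd}>n}\frac1{j^2}.
\]
The tail sum is \(\frac{2}{\pi n}+O(n^{-2})\). Substituting \(s=\theta_j\), the main sum is a Riemann sum with mesh \(2\pi/N\) over odd \(j\). It approximates
\[
\frac{4}{\pi}\cdot\frac{1}{N}\cdot\frac{1}{2\pi}\cdot\pi^2\int_0^\pi\frac{1-(1-s/\pi)\cos s-\sin(s)/\pi}{s^2}\wrt s .
\]
Evaluating this integral by integration by parts produces the \(\int_0^\pi\frac{\sin u}{u}\wrt u\) term. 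The remaining terms should combine with the tail to give \(-4/\pi\). I would verify this by a careful elementary computation, possibly checking numerically against \(2.4306\).

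\textbf{Main obstacle.} The hardest step is controlling the Riemann-sum error at \(O(1/n^2)\). The integrand \((1-\rho)/j^2\) behaves like a constant times \(1/N\) near \(s=0\) after scaling, so the small-\(j\) terms need separate treatment. I would Taylor expand \(1-\rho_{j,n}\) for \(j\le\sqrt N\) and use the Euler–Maclaurin (midpoint rule) error bound on the smooth remainder for larger \(j\). The odd-\(j\) midpoint structure should make the first-order error vanish, giving the claimed \(O(n^{-2})\).
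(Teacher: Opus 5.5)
Your argument follows the paper's proof. Lemma~\ref{lem:momentbound} reduces the problem to the first absolute moment; then come $\int_{-\pi}^{\pi}\riv_n=\pi$, term-by-term integration, cancellation of $\pi/2$ against $\sum_{j\ \mathrm{odd}}j^{-2}=\pi^2/8$, the expansion of $\cot(\pi/N)$, a Riemann sum, and the tail $\frac{2}{\pi n}$. Your variable $s=\pi x$ is just a rescaling of the paper's $\psi$. Citing the squared-modulus representation to get $\riv_n\ge0$ is a useful addition, since Lemma~\ref{lem:momentbound} needs nonnegativity and the paper leaves it implicit.

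Your ``main obstacle'' paragraph, however, misidentifies the difficulty and contains a false claim.
\begin{itemize}
\item Set $g(s)=s^{-2}\bigl(1-(1-s/\pi)\cos s-\sin(s)/\pi\bigr)$. Its singularity at $0$ is removable ($g(0)=1/2$, since the numerator vanishes to second order), so $g$ is smooth on $[0,\pi]$ and no splitting at $j\approx\sqrt N$ is needed.
\item The uniform $O(N^{-2})$ error in $1-\rho_{j,n}$, summed against $j^{-2}$, contributes only $O(N^{-2})$.
\item Up to $O(N^{-2})$, the main sum is $\frac{\pi^2}{N^2}\sum_{j\ \mathrm{odd}\le n}g(s_j)$ with $s_j=j\pi/N$. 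So the crude bound $\frac{2\pi}{N}\sum_j g(s_j)=\int_0^\pi g+O(1/N)$ already gives $O(n^{-2})$ for the moment, and this is all the paper uses.
\item Your midpoint-cancellation claim is wrong. The odd nodes are midpoints of a partition of $[0,\pi-\pi/N]$ ($n$ odd) or $[0,\pi-2\pi/N]$ ($n$ even). Since $g(\pi)=1/\pi^2\neq0$, the uncovered end interval makes the normalized Riemann-sum error of exact order $1/N$. This does not hurt the result, but drop the claim.
\end{itemize}

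Finally, the constant you left unchecked does come out. Let $S=\int_0^\pi\frac{\sin u}{u}\wrt u$. Integration by parts gives $\int_0^\pi\frac{1-\cos s}{s^2}\wrt s=S-\frac{2}{\pi}$. Also $\frac{s\cos s-\sin s}{s^2}=\bigl(\frac{\sin s}{s}\bigr)'$ integrates to $-1$. Hence $\int_0^\pi g=S-\frac{3}{\pi}$, which is the paper's $\int_0^1\psi=\pi S-3$. The moment is then $\frac{2}{N}\bigl(S-\frac{3}{\pi}\bigr)+\frac{2}{\pi n}+O(n^{-2})=\bigl(2S-\frac{4}{\pi}\bigr)\frac1n+O(n^{-2})$.
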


\begin{proof}
The result follows directly from \Cref{lem:momentbound} and \Cref{lem:weissebound}.
\end{proof}

\section{Proofs}

Our first lemma establishes that the periodic extension of \(f(x)=L\abs x\) from \([-\pi,\pi]\) to all of \(\R\) is the worst-case function for an upper bound on \(\max_{x\in\R}\abs{f(x)-p(x)}\), and that the maximum is achieved for \(x=0\).

\begin{lemma}\label{lem:momentbound}
Let \(K \not\equiv 0\) be any non-negative kernel supported on \([-\pi,\pi]\) and set
\[
\hat K(t)=\frac1{\int K(t)\wrt t}K(t).
\]
Let \(f_0(x)\) be the periodic extension of \(L\abs x\) from \([-\pi,\pi]\) to all of \(\R\). Then
\begin{equation}
    \sup_{x\in\R}|(f*\hat K)(x)-f(x)|
    \le \abs{ (f_0*\hat K)(0)-f_0(0) }
    = L\dfrac{\int_{-\pi}^\pi \abs tK(t) \wrt t}{\int_{-\pi}^\pi K(t) \wrt t}.
\end{equation}
Equality is obtained at \(x=0\) for the triangle wave \(f(x) = L|x|\) periodically extended from \([-\pi, \pi]\) to the real line.
\end{lemma}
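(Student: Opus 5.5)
Since \(\hat K\) is non-negative with unit mass and is supported on \([-\pi,\pi]\), we can write \(f(x)\) as an average against \(\hat K\). For any \(x\in\R\) this gives
\[
(f*\hat K)(x)-f(x)=\int_{-\pi}^{\pi}\bigl(f(x-t)-f(x)\bigr)\hat K(t)\,dt .
\]
Taking absolute values inside the integral and using \(\hat K\ge0\), the error is at most \(\int_{-\pi}^{\pi}|f(x-t)-f(x)|\hat K(t)\,dt\). The Lipschitz hypothesis gives \(|f(x-t)-f(x)|\le L|t|\). Therefore
\[
\sup_x|(f*\hat K)(x)-f(x)|\le L\int_{-\pi}^{\pi}|t|\hat K(t)\,dt = L\,\frac{\int_{-\pi}^\pi |t|K(t)\,dt}{\int_{-\pi}^\pi K(t)\,dt}.
\]
This is the inequality once we identify the right-hand side with \(|(f_0*\hat K)(0)-f_0(0)|\).

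For that identification we compute directly. We have \(f_0(0)=0\). For \(t\in[-\pi,\pi]\), the point \(-t\) also lies in \([-\pi,\pi]\), so \(f_0(-t)=L|t|\) with no wrap-around from the periodic extension. Hence \((f_0*\hat K)(0)=\int_{-\pi}^{\pi}L|t|\hat K(t)\,dt\ge0\). The absolute value is therefore harmless, and this quantity equals the moment expression above.

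The equality claim then follows immediately. The function \(f_0\) is itself \(L\)-Lipschitz on \(\R\), since the triangle wave has slope \(\pm L\) and is continuous at \(\pm\pi\) and their translates. So \(f=f_0\) is admissible, and at \(x=0\) its error equals the bound, which shows the supremum is attained.

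No step here is a real obstacle. The only points to check are that the support restriction to \([-\pi,\pi]\) avoids any periodic wrap-around in the evaluation at \(0\), and that \(f_0\) is globally \(L\)-Lipschitz. Both are elementary.
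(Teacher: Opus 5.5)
Your proof is correct and takes the same approach as the paper: write the error as $\int_{-\pi}^{\pi}(f(x-t)-f(x))\hat K(t)\,dt$, move the absolute value inside using $\hat K\ge 0$, and apply the Lipschitz bound $|f(x-t)-f(x)|\le L|t|$. You also spell out two points the paper leaves implicit: the identification of the bound with $(f_0*\hat K)(0)$, where there is no wrap-around, and the fact that $f_0$ is globally $L$-Lipschitz, which settles the equality case.
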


\begin{proof}
\begin{align*}
\abs{(f*\hat K)(x)-f(x)}
  &=\abs{\int_{-\pi}^\pi(f(x-t)-f(x))\hat K(t)\wrt t} \\
  &\le\int_{-\pi}^\pi\abs{f(x-t)-f(x)}\hat K(t)\wrt t \\
  &\le L\int_{-\pi}^\pi\abs t\hat K(t)\wrt t.
\end{align*}
\end{proof}

Our goal is therefore to compute the ratio of \(\int_{-\pi}^\pi \abs t K \wrt t\) and \(\int_{-\pi}^\pi K \wrt t\) for \(K\in\set{\jac_m,\riv_n}\), which the next lemmas do.

\begin{lemma}\label{lem:jacksonbound}
\[
    \dfrac{\int_{-\pi}^\pi\abs t\jac_m(t) \wrt t}{\int_{-\pi}^\pi\jac_m(t) \wrt t}
    =
    \dfrac{6\log (2)}{\pi}\cdot \dfrac{1}{m}
    + O\left( \dfrac{\log m}{m^3}\right).
\]
\end{lemma}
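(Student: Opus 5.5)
The denominator and the numerator will be computed separately, each to the needed accuracy.

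\emph{Denominator.} Since \(\frac{\sin(mt/2)}{\sin(t/2)}=\sum_{|k|\le (m-1)/2}e^{ikt}\) (for \(m\) odd; in general it is a sum of \(m\) exponentials with half-integer or integer frequencies), its square is the Fej\'er-type sum \(\sum_{|j|<m}(m-|j|)e^{ijt}\). Squaring once more and integrating over \([-\pi,\pi]\) gives, by Parseval,
\[
\int_{-\pi}^\pi \jac_m=\frac{2\pi}{m^4}\sum_{|j|<m}(m-|j|)^2=\frac{2\pi(2m^3+m)}{3m^4}=\frac{4\pi}{3m}\Bigl(1+O(m^{-2})\Bigr).
\]

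\emph{Numerator.} Write \(\int_{-\pi}^\pi|t|\jac_m=2\int_0^\pi t\,\jac_m(t)\wrt t\) and substitute \(u=mt/2\). This gives
\[
2\cdot\frac{4}{m^2}\int_0^{m\pi/2}u\,\frac{\sin^4u}{m^4\sin^4(u/m)}\wrt u .
\]
Use the expansion \(\frac{1}{m^4\sin^4(u/m)}=\frac{1}{u^4}+\frac{2}{3m^2u^2}+O(m^{-4})\), valid uniformly for \(u\le m\pi/2\), since \(u/m\le\pi/2\) stays away from \(\pi\). The leading term is \(\frac{8}{m^2}\int_0^{m\pi/2}\frac{\sin^4u}{u^3}\wrt u\). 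Because the tail \(\int_X^\infty\) is \(O(X^{-2})=O(m^{-2})\), this equals \(\frac{8}{m^2}\bigl(\int_0^\infty\frac{\sin^4u}{u^3}\wrt u+O(m^{-2})\bigr)\).

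The correction term is \(\frac{16}{3m^4}\int_0^{m\pi/2}\frac{\sin^4u}{u}\wrt u=O(\log m/m^4)\). The remainder of the expansion needs a little care: it should be bounded by \(\frac{C}{m^4}\int_0^{m\pi/2} u\sin^4 u\cdot (u/m)^0\ldots\). To control it, I would write \(\frac{1}{\sin^4 x}-\frac1{x^4}-\frac{2}{3x^2}\), which is bounded on \([0,\pi/2]\). This gives a contribution \(\frac{8}{m^2}\cdot\frac{1}{m^4}\int_0^{m\pi/2}u\,O(1)\wrt u=O(m^{-4})\).

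The key constant is \(\int_0^\infty\frac{\sin^4u}{u^3}\wrt u=\log 2\). To evaluate it, write \(\sin^4u=\frac{3-4\cos2u+\cos4u}{8}\), integrate by parts twice, and use Frullani's integral. Equivalently, the integral equals \(\frac18\int_0^\infty\frac{3-4\cos 2u+\cos 4u}{u^3}\wrt u\), which reduces to \(\frac{1}{8}\cdot\frac{1}{2}\int_0^\infty\frac{-16\cos2u\cdot(-1)\ldots}{u}\). In the end this gives \(\frac{1}{2}\bigl(4\log 2-2\log 2\bigr)\cdot\frac12\ldots=\log 2\), a standard value that I would verify carefully.

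\emph{Combining.} The numerator is therefore \(\frac{8\log 2}{m^2}+O(\log m/m^4)\). Dividing by the denominator gives \(\frac{8\log2}{m^2}\cdot\frac{3m}{4\pi}\bigl(1+O(m^{-2})\bigr)=\frac{6\log 2}{\pi m}+O(\log m/m^3)\), which is the claim. The main obstacle is bookkeeping: the \(\log m\) arises only from the \(1/u\) correction term, so I must check that no larger error enters. That means making the uniform expansion of \((m\sin(u/m))^{-4}\) over the whole range \(u\le m\pi/2\) rigorous, and confirming the Frullani evaluation of the constant.
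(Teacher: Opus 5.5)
Your proof is correct and follows essentially the paper's route. You evaluate the denominator exactly from the Fej\'er-type expansion (your Parseval step is the paper's orthogonality computation). You rescale the numerator so the main term is \(\int_0^\infty \sin^4 u/u^3\,du=\log 2\), which is the paper's \(4\log 2\) under \(u=mt\). The remaining pieces match too: an \(O(m^{-2})\) truncation tail and an \(O(\log m)\) correction from \(\int \sin^4 u/u\,du\). The only difference is that you use the expansion \(\csc^4 x=x^{-4}+\tfrac23x^{-2}+O(1)\) on \((0,\pi/2]\), where the paper instead sandwiches \(1\le (x/\sin x)^4\le 1+6x^2\). Your version has the small bonus of pinning down the \(\log m\) term exactly, so the stated error order is sharp.

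Your computation of the constant is garbled as written, but the method is sound. Integrating by parts twice and applying Frullani gives \(\int_0^\infty \sin^4 u/u^3\,du=\int_0^\infty (\cos 2u-\cos 4u)/u\,du=\log 2\), a value the paper simply quotes.
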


\begin{proof}
We will set
\[
A= m^4\int_0^\pi t\jac_m(t) \wrt t, \quad 
B= m^4\int_0^\pi\jac_m(t) \wrt t.
\]
Using the evenness of $\jac_m(t)$ and $|t|\jac_m(t)$ and canceling common factors in the numerator and denominator, the desired quantity is \(A/B\).

We first compute \(B\). We will use the identity
\begin{equation}\label{eq:jkernelexpansion}
    \dfrac{1}{2m}\left( \dfrac{\sin(mt/2)}{\sin(t/2)}\right)^{2}
    =
    \dfrac{1}{2}
    + \sum_{k=1}^{m-1}\left(1-\dfrac{k}{m}\right) \cos(kt)
\end{equation}
found in \cite[Page 203]{devore1993constructive}.\footnote{The formula in \cite[Page 203]{devore1993constructive} has a typo that includes \(k=0\) in the sum. We have corrected that here.}
Along with orthogonality of \(\cos(kx)\), we have
\begin{align*}
    B
    =
    \int_0^\pi \left(\dfrac{\sin(mx/2)}{\sin(x/2)} \right)^4 \wrt x
    &=
    4m^2 \left( \dfrac{\pi}{4}
    + \dfrac{\pi}{2}\sum_{k=1}^{m-1}\left(1-\dfrac{k}{m}\right)^2\right) \\
    &=
    4m^2 \left( \dfrac{\pi}{4}
    + \dfrac{\pi}{2}\dfrac{(m-1)(2m-1)}{6m}\right) \\
    &=
    \frac\pi3(2m^3+m).
\end{align*}

We now turn our attention to the numerator. First use the substitution \(u=mt\) and factor to obtain
\[
A=
m^2 \int_0^{m \pi}
u \pare{\dfrac{\sin(u/2)}{u/2}}^4
\pare{\dfrac{u/2m}{\sin(u/2m)}}^4
\wrt u.
\]
First using \(\abs{x/\sin(x)}\ge1\) and then \(\abs{\sin(x)/x}\le1/\abs x\),
\begin{align*}
A
&\ge
m^2 \int_0^{m \pi} u \pare{\dfrac{\sin(u/2)}{u/2}}^4 \wrt u \\
&=
m^2 \pare{4\log2-\int_{m \pi}^\infty u \pare{\dfrac{\sin(u/2)}{u/2}}^4 \wrt u} \\
&\ge
m^2 \pare{4\log2-\int_{m \pi}^\infty \frac{16}{u^3} \wrt u} \\
&=
m^2 \pare{4\log2-\frac8{\pi^2m^2}}.
\end{align*}
For an upper bound on \(A\), use \(\abs{{x}/{\sin (x)}}\le(1+6x^2)^{1/4}\) for \(\abs x\le \pi/2\):
\begin{equation}
    A
    \leq
    m^2\int_0^{m \pi}
    u \left( \dfrac{\sin(u/2)}{u/2}\right)^4
    \left(1+\dfrac{3u^2}{2m^2}\right) \wrt u
    \leq
    m^2\cdot4\log(2)
    + \dfrac{3}{16}\int_0^{m\pi}\dfrac{\sin^4(u/2)}{u} \wrt u.
\end{equation}
The final integral may be bounded by
\begin{equation}
    \int_0^{m\pi}\dfrac{\sin^4(u/2)}{u} \wrt u
    \leq
    \int_0^{2\pi}\dfrac{(u/2)^4}{u} \wrt u
    + \int_{2\pi}^{m\pi}\dfrac{1}{u} \wrt u
    =
    \dfrac{\pi^4}{4}+\log(m/2).
\end{equation}
In particular, \(A=m^2\cdot4\log(2)+O(\log m)\). Finally, we take the quotient:
\[
\frac AB
=
\dfrac{\int_0^\pi t\jac_m(t) \wrt t}{\int_0^\pi \jac_m(t) \wrt t}
=
\dfrac{4 \log (2) m^2 +O(\log m)}{\dfrac{\pi}{3}(2m^3 + m)}
=
\dfrac{6 \log 2}{\pi} \cdot \dfrac{1}{m}
+
O\left(\dfrac{\log m}{m^3}\right).
\]
\end{proof}

\begin{lemma}\label{lem:weissebound}
\[
    \dfrac{\int_{-\pi}^\pi \abs t\riv_n(t) \wrt t}{\int_{-\pi}^\pi \riv_n(t) \wrt t}
    =
    \left(2\int_0^\pi \dfrac{\sin t}{t} \wrt t - \dfrac{4}{\pi}\right)\cdot \dfrac{1}{n}
    + O\left(\dfrac{1}{n^2}\right).
\]
\end{lemma}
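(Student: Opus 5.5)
The plan is to compute numerator and denominator exactly, as Fourier-type sums, and then extract asymptotics. Since \(\riv_n\) is an even cosine sum on \([-\pi,\pi]\), the denominator is immediate: \(\int_{-\pi}^\pi \riv_n(t)\wrt t = \pi\), because every \(\cos(jt)\) integrates to zero over a full period. For the numerator, use evenness and the elementary integral
\[
\int_0^\pi t\cos(jt)\wrt t = \frac{(-1)^j-1}{j^2},
\]
which vanishes for even \(j\) and equals \(-2/j^2\) for odd \(j\). This gives the exact formula
\[
\frac{\int_{-\pi}^\pi \abs t\riv_n(t)\wrt t}{\int_{-\pi}^\pi \riv_n(t)\wrt t}
= \frac{\pi}{2} - \frac{4}{\pi}\sum_{\substack{1\le j\le n\\ j\text{ odd}}}\frac{\rho_{j,n}}{j^2}.
\]
The remaining work is a careful asymptotic evaluation of this sum in the variable \(N=n+2\).

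Write \(\theta_j = j\pi/N\). Since \(\cot(\pi/N) = N/\pi - \pi/(3N) + O(N^{-3})\), we can expand
\[
\rho_{j,n} = \Bigl(1-\frac jN\Bigr)\cos\theta_j + \frac{1}{\pi}\sin\theta_j + O\!\left(\frac{\sin\theta_j}{N^2}\right).
\]
Then use \(\sum_{j\text{ odd}} 1/j^2 = \pi^2/8\) to rewrite \(\pi/2 = \frac4\pi\sum_{j \text{ odd}\ge1} j^{-2}\). The target quantity becomes
\[
\frac{4}{\pi}\sum_{j\text{ odd}}\frac{1-\rho_{j,n}}{j^2},
\]
where we set \(\rho_{j,n}=0\) for \(j>n\). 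Terms with \(j>n\) contribute \(O(1/n)\) in total: explicitly \(\frac4\pi\sum_{j>n,\text{ odd}} j^{-2} = \frac{2}{\pi n}+O(n^{-2})\). This must be kept, not discarded, since we want the exact \(1/n\) coefficient.

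For \(j\le n\), the quantity \(1-\rho_{j,n}\) is a smooth function \(g(\theta_j)\) of \(\theta_j\), up to \(O(N^{-2})\) corrections, with
\[
g(\theta) = 1 - \Bigl(1-\frac\theta\pi\Bigr)\cos\theta - \frac{\sin\theta}{\pi},
\]
which satisfies \(g(\theta)=O(\theta^2)\) near \(0\). Summing \(g(\theta_j)/j^2 = (\pi/N)^2 g(\theta_j)/\theta_j^2\) over odd \(j\) is a Riemann sum with spacing \(2\pi/N\). It is therefore
\[
\frac{\pi}{2N}\int_0^\pi \frac{g(\theta)}{\theta^2}\wrt\theta + O(N^{-2}),
\]
which is valid because \(g(\theta)/\theta^2\) is smooth and bounded on \([0,\pi]\); midpoint-type errors are \(O(N^{-2})\). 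The \(O(N^{-2}\sin\theta_j)\) corrections to \(\rho\) contribute \(\sum_j N^{-2}\theta_j/j^2 = O(N^{-3}\log N)\), which is negligible. Replacing \(1/N\) by \(1/n\) costs only \(O(n^{-2})\).

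The main obstacle is evaluating \(\int_0^\pi g(\theta)\theta^{-2}\wrt\theta\) in closed form and checking that, combined with the tail contribution \(2/\pi\), the total \(1/n\) coefficient is
\[
2\int_0^\pi\frac{\sin u}{u}\wrt u - \frac4\pi.
\]
To do this, integrate by parts using \(-1/\theta\) as an antiderivative of \(\theta^{-2}\). The boundary term at \(0\) vanishes since \(g=O(\theta^2)\), and the boundary term at \(\pi\) gives \(-g(\pi)/\pi = -1/\pi\). It remains to compute \(\int_0^\pi g'(\theta)/\theta\wrt\theta\), where
\[
g'(\theta) = \frac{\cos\theta}{\pi} + \Bigl(1-\frac\theta\pi\Bigr)\sin\theta - \frac{\cos\theta}{\pi} = \Bigl(1-\frac\theta\pi\Bigr)\sin\theta.
\]
Hence
\[
\int_0^\pi \frac{g'(\theta)}{\theta}\wrt\theta = \int_0^\pi\frac{\sin\theta}{\theta}\wrt\theta - \frac{2}{\pi}.
\]
So \(\int_0^\pi g\theta^{-2}\wrt\theta = \mathrm{Si}(\pi) - 3/\pi\). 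Multiplying by \(\frac4\pi\cdot\frac\pi2 = 2\) gives \(2\,\mathrm{Si}(\pi) - 6/\pi\), and adding the tail's \(2/\pi\) yields \(2\,\mathrm{Si}(\pi)-4/\pi\), as claimed. I would double-check the Riemann-sum normalization for odd \(j\) (spacing \(2\pi/N\), hence the factor \(1/2\)) carefully, since it is the most error-prone constant.
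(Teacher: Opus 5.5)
Your proposal is correct and follows essentially the same route as the paper. Both arguments do an exact term-by-term integration (denominator $\pi$), rewrite $\pi/2$ via $\sum_{j\ \text{odd}} j^{-2}=\pi^2/8$ to split off $\sum_{j\le n}(1-\rho_{j,n})/j^2$ from the tail, expand $\cot(\pi/N)$, evaluate a Riemann sum over odd $j$, and integrate by parts. Your $\int_0^\pi g(\theta)\theta^{-2}\,\mathrm{d}\theta=\mathrm{Si}(\pi)-3/\pi$ is exactly the paper's $\int_0^1\psi=\pi\,\mathrm{Si}(\pi)-3$ under $\theta=\pi x$, and your constants (the $\pi/(2N)$ factor from the odd-$j$ spacing and the tail contribution $2/(\pi n)$) are right.
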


\begin{proof}
\(\riv_n\) is even, so it suffices to integrate over \(0\) to \(\pi\) instead.
The denominator can be computed exactly:
\[
\int_{-\pi}^\pi\riv_n(t)\wrt t=\pi
\]
since \(\int_{-\pi}^\pi1\wrt t=2\pi\) and \(\int_{-\pi}^\pi\cos(jt)\wrt t=0\) for \(j\ge 1\).
Integrating the numerator term by term gives
\[
\int_0^\pi t\riv_n(t) \wrt t
=
\dfrac{\pi^2}{4}
-
2\underset{j \; \textrm{odd}}{\sum_{1 \leq j \leq n}}
\dfrac{\rho_{j,n}}{j^2}
=
2\underset{j \; \textrm{odd}}{\sum_{1 \leq j \leq n}}
\dfrac{1 - \rho_{j,n}}{j^2}
+
2\underset{j \; \textrm{odd}}{\sum_{j > n}}
\dfrac{1}{j^2},
\]
where the first equality follows from direct integration of \(t\riv_n(t)\) according to its series definition and the second equality uses the fact that the sum of the odd reciprocal squares is equal to \(\pi^2/8\).

We first claim that the first term may be rewritten
\begin{equation}
    2\underset{j \; \textrm{odd}}{\sum_{1 \leq j \leq n}}
    \dfrac{1 - \rho_{j,n}}{j^2}
    =
    \dfrac{1}{n+2} \int_0^1 \psi(x) \wrt x
    + O\left(\dfrac{1}{n^2}\right),
\end{equation}
where
\[
    \psi(x)
    =
    \dfrac{1}{x^2}
    \left(1-(1-x)\cos(\pi x) - \dfrac{\sin(\pi x)}{\pi}\right).
\]
To see this, observe that
\[
    \dfrac{\cot\left(\dfrac{\pi}{n+2}\right)}{n+2}
    =
    \dfrac{1}{\pi} + O\left(\dfrac{1}{n^2}\right)
\]
implies
\[
    1 - \rho_{j,n}
    =
    1-\left(1-\dfrac{j}{n+2}\right)
    \cos\left(\dfrac{\pi j}{n+2}\right)
    -\dfrac{\sin\left(\dfrac{\pi j}{n+2}\right)}{\pi}
    + O\left(\dfrac{1}{n^2}\right).
\]
Setting
\[
\phi(x) = 1-(1-x)\cos(\pi x) - \dfrac{\sin(\pi x)}{\pi},
\]
we then have
\[
    1 - \rho_{j,n}
    =
    \phi\left(\dfrac{j}{n+2}\right)
    + O\left(\dfrac{1}{n^2}\right).
\]
Finally, we interpret the summation as a Riemann sum. Recalling the definition of \(\psi(x)\), which we note extends smoothly to \([0, 1]\) since \(\phi(0) = \phi'(0) = 0\), we write
\[
    2\underset{j \; \textrm{odd}}{\sum_{1 \leq j \leq n}}
    \dfrac{\phi\left(\dfrac{j}{n+2}\right)}{j^2}
    =
    \dfrac{2}{(n+2)^2}
    \underset{j \; \textrm{odd}}{\sum_{1 \leq j \leq n}}
    \psi\left(\dfrac{j}{n+2}\right)
    =
    \dfrac{1}{n+2} \int_0^1 \psi(x) \wrt x
    + O\left(\dfrac{1}{n^2}\right).
\]
Integration by parts yields
\[
    \int_0^1 \psi(x) \wrt x
    =
    \pi \int_0^\pi\dfrac{\sin u}{u} \wrt u -3.
\]
This provides control on the first term. The second term may be estimated by
\[
    2\underset{j \; \textrm{odd}}{\sum_{j > n}}
    \dfrac{1}{j^2}
    =
    \dfrac{1}{n}
    + O\left(\dfrac{1}{n^2}\right)
    =
    \dfrac{1}{n+2}
    + O\left(\dfrac{1}{n^2}\right).
\]
As a result, we obtain
\[
    \int_0^\pi t \riv_n(t) \wrt t
    =
    \dfrac{1}{n}
    \left(\pi \int_0^\pi \dfrac{\sin u}{u} \wrt u - 2\right)
    + O\left(\dfrac{1}{n^2}\right).
\]
Consequently, the quotient satisfies
\[
    \dfrac{\int_0^\pi t\riv_n(t) \wrt t}{\int_0^\pi \riv_n(t) \wrt t}
    =
    \dfrac{
    \dfrac{1}{n}\left(\pi \int_0^\pi \dfrac{\sin u}{u} \wrt u - 2\right)
    + O\left(\dfrac{1}{n^2}\right)}
    {\pi/2}
    =
    \left( 2 \int_0^\pi \dfrac{\sin u}{u} \wrt u - \dfrac{4}{\pi}\right)
    \dfrac{1}{n}
    + O\left(\dfrac{1}{n^2}\right).
\]
\end{proof}

\section*{Acknowledgements}

This material is based upon work supported by the National Science Foundation under grant no. DMS-2513687. Nicholas was supported by the MathWorks fellowship.

\bibliographystyle{alpha}
\bibliography{main}

\end{document}

%% file: preamble.tex
\usepackage{multicol}
\usepackage{mleftright}
\usepackage{enumitem}
\usepackage{caption}
 \usepackage[nameinlink,capitalize]{cleveref}

\usepackage{refcount}

\newcommand{\wrt}{\,\textnormal d}

\newcommand{\abs}[1]{\mleft|#1\mright|}

\newcommand{\pare}[1]{\mleft(#1\mright)}

\newcommand{\set}[1]{{\left\{{#1}\right\}}}

\newcommand{\floor}[1]{\mleft\lfloor#1\mright\rfloor}

\makeatletter
\newtheorem*{rep@theorem}{\rep@title}
\newcommand{\newreptheorem}[2]{%
\newenvironment{rep#1}[1]{%
 \def\rep@title{#2 \ref{##1}}%
 \begin{rep@theorem}}%
 {\end{rep@theorem}}}
\makeatother

\newreptheorem{theorem}{Theorem}
\newreptheorem{coro}{Corollary}

\AddToHook{env/lemma/begin}{\crefalias{theorem}{lemma}}
\AddToHook{env/corollary/begin}{\crefalias{theorem}{corollary}}
\AddToHook{env/proposition/begin}{\crefalias{theorem}{proposition}}

\crefname{theorem}{Theorem}{Theorems}
\crefname{lemma}{Lemma}{Lemmas}
\crefname{corollary}{Corollary}{Corollaries}
\crefname{equation}{}{}
\crefname{subsection}{subsection}{subsections}
\crefname{proposition}{Proposition}{Propositions}